\newtheorem{theorem}[equation]{Theorem}
\newtheorem{corollary}[equation]{Corollary}
\newtheorem{lemma}[equation]{Lemma}
\theoremstyle{remark}
\newtheorem{remark}[equation]{Remark}
\newtheorem{question}[equation]{Question}
\newtheorem{example}[equation]{Example}
\numberwithin{equation}{section}
\newcommand{\oo}{\mathcal{O}}
\newcommand{\Vol}{\operatorname{Vol}}
\newcommand{\D}{\mathcal{D}}
\begin{document}
\title[The Steklov spectrum of surfaces: asymptotics and invariants]
{The Steklov spectrum of surfaces: asymptotics and invariants}
\author{Alexandre Girouard}
\address{D\'e\-par\-te\-ment de math\'ematiques et de
sta\-tistique, Universit\'e Laval, Pavillon Alexandre-Vachon,
1045, av. de la M\'edecine,
Qu\'ebec Qc G1V 0A6, 
Canada }
\email{alexandre.girouard@mat.ulaval.ca}
\author{Leonid Parnovski}
\address{Department of Mathematics, University College London, 
Gower Street, London WC1E 6BT, UK}
\email{leonid@math.ucl.ac.uk}
\author{Iosif Polterovich}
\address{D\'e\-par\-te\-ment de math\'ematiques et de
sta\-tistique, Univer\-sit\'e de Mont\-r\'eal CP 6128 succ
Centre-Ville, Mont\-r\'eal QC  H3C 3J7, Canada.}
\email{iossif@dms.umontreal.ca}
\author{David A. Sher}
\address{Department of Mathematics, University of Michigan, 2074 East Hall, 530 Church Street, Ann Arbor, MI  48109-1043, U.S.A.}
\email{dsher@umich.edu}

\begin{abstract}
  We obtain precise asymptotics for the Steklov
  eigenvalues on a compact Riemannian surface with boundary.  It is
  shown that the number of connected components of the boundary, as
  well as their lengths, are invariants of the Steklov spectrum. The
  proofs are based on pseudodifferential techniques for the
  Dirichlet-to-Neumann operator and on a number--theoretic argument. 
\end{abstract}
\maketitle

\section{Introduction and main results}
\subsection{Steklov spectrum}
Let $\Omega$ be a smooth compact Riemannian manifold of dimension $n$
with smooth boundary $M=\partial \Omega$ of dimension $n-1$. Consider
the Steklov eigenvalue problem on $\Omega$:
\begin{equation}
\label{stek}
\begin{cases}
  \Delta u=0& \mbox{ in } \Omega,\\
  \frac{\partial u}{\partial\nu}=\sigma \,u& \mbox{ on
  }M.
\end{cases}
\end{equation}
Its spectrum is discrete and is given by a sequence of eigenvalues
$$0=\sigma_0\leq \sigma_1 \le \sigma_2 \le \dots \nearrow \infty.$$
The Steklov eigenvalues are the eigenvalues
of the Dirichlet-to-Neumann operator\linebreak
$\D:C^{\infty}(M) \rightarrow C^\infty(M)$; recall that if $f\in C^{\infty}(M)$, then $\D f=\partial_\nu(Hf)$, where $Hf \in C^\infty(\Omega)$ is the harmonic extension of $f$ to
$\Omega$ and $\partial_\nu$ denotes the outward normal derivative. 

\subsection{Spectral asymptotics}
The Dirichlet-to-Neumann operator $\D$ is an elliptic self-adjoint pseudodifferential operator of order one
(see \cite[pp. 37-38]{Ta}). Its eigenvalues satisfy the asymptotic formula  
\begin{equation}
\label{Weylaw}
\sigma_j= 2\pi \left(\frac{j}{\Vol(\mathbb{B}^{n-1})\, \Vol(M)}\right)^\frac{1}{n-1} + O(1),
\end{equation}
which is a direct consequence of  Weyl's law with a sharp remainder estimate (see \cite{ho}):
$$
\#(\sigma_j < \sigma)=\frac{\Vol(\mathbb{B}^{n-1})\,\Vol (M)}{(2\pi)^{n-1}} \sigma^{n-1} + O(\sigma^{n-2}).
$$
Here $\mathbb{B}^{n-1}$ is a unit ball in $\mathbb{R}^{n-1}$. For simply connected surfaces, much more precise asymptotics
were independently obtained by Rozenblyum and Guillemin--Melrose
(see \cite{Ro, e}):
\begin{equation}
\label{refined}
\sigma_{2j}=\sigma_{2j+1} + \mathcal O(j^{-\infty})=\frac{2\pi}{\ell(M)}j+\mathcal O(j^{-\infty}) .
\end{equation}
Here $\ell(M)$ denotes the length of the boundary $M$, and the notation $\mathcal O(j^{-\infty})$ means that the error term decays faster than any power of $j$.

%Geometric properties of  Steklov eigenvalues on Riemannian manifolds
%have been actively investigated in the recent years, see \cite{CEG,
%FS,  GP, Ja, KKP, PS}.  In particular, various estimates on
%eigenvalues and their multiplicities have been obtained.

The first goal of this paper is to prove an analogue of \eqref{refined} for an {\it arbitrary} surface $\Omega$. Given a finite sequence $C=\{\alpha_1,\cdots,\alpha_k\}$ of positive numbers,
consider the following union of multisets (i.e. sets with multiplicity):
$\{0,..\dots,0\}\cup \alpha_1 \mathbb{N}\cup \alpha_1 \mathbb{N}\cup
\alpha_2\mathbb N\cup\alpha_2\mathbb N\cup \dots
\cup\alpha_k\mathbb{N}\cup \alpha_k\mathbb{N}$, where the first multiset
contains $k$ zeros and $\alpha\mathbb{N}=\{\alpha,2\alpha,3\alpha,\dots,n\alpha,\dots\}$.
We rearrange the elements of this multiset into a monotone increasing sequence $S(C)$.
For example,
$S(\{1\})=\{0,1,1,2,2,3,3,\cdots\}$ and $S(\{1, \pi\})=\{0,0,1,1,2,2,3,3,\pi,\pi,4,4,5,5,6,6, 2\pi,2\pi,\\7,7,\cdots\}$.
\begin{theorem}\label{Theorem:MainAsymptotics}
  Let $\Omega$ be a smooth compact Riemannian surface with boundary $M$.
  Let $M_1,\cdots,M_k$ be the connected components of the
  boundary $M=\partial\Omega$, with lengths $\ell(M_i), 1\leq i\leq
  k$. 
  Set $R=\left\{\frac{2\pi}{\ell(M_1)},\cdots,\frac{2\pi}{\ell(M_k)}\right\}$. Then
  $$\sigma_{j}=S(R)_j+\oo(j^{-\infty}).$$
\end{theorem}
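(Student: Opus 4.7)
The plan is to prove the operator-level identity $\D = A_0 + K$, where $A_0 := \bigoplus_{i=1}^{k} \sqrt{-\Delta_{M_i}}$ acts on $L^2(M) = \bigoplus_i L^2(M_i)$ and $K$ is a smoothing pseudodifferential operator on $M$, and then convert this identity into asymptotics for the ordered eigenvalues. A direct computation confirms that $\operatorname{spec}(A_0) = S(R)$: on each circle $M_i$ of length $\ell(M_i)$ the operator $\sqrt{-\Delta_{M_i}}$ has spectrum $\{0\} \cup \{2\pi n/\ell(M_i)\}_{n \geq 1}$ with multiplicity two for $n \geq 1$, and merging these across the $k$ disjoint components yields exactly the multiset $S(R)$.

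\textbf{Pseudodifferential reduction.}
To establish that $\D - A_0$ is smoothing, I would use the block decomposition of $\D$ with respect to $L^2(M) = \bigoplus_i L^2(M_i)$. For $i \neq j$ the block $\D_{ij}$ is smoothing because the Schwartz kernel of the classical pseudodifferential operator $\D$ is smooth off the diagonal of $M \times M$, and the disjoint components $M_i$ and $M_j$ avoid this diagonal. For the diagonal blocks $\D_{ii}$ I would build a microlocal parametrix for the harmonic extension problem in a collar neighborhood of $M_i$, working in isothermal coordinates so that the Laplacian takes a standard form. The symbol computation then follows the Rozenblyum--Guillemin--Melrose analysis of the simply connected case and extends here because boundary data on the other components contribute only a smoothing correction to the harmonic extension near $M_i$. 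The upshot is $\D_{ii} - \sqrt{-\Delta_{M_i}} \in \Psi^{-\infty}$ for every $i$, so $\D - A_0$ is smoothing.

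\textbf{From smoothing perturbation to ordered eigenvalues.}
With $\D = A_0 + K$ and $K$ smoothing, the matrix entries $K_{\alpha\beta}$ of $K$ in the Fourier eigenbasis of $A_0$ satisfy $|K_{\alpha\beta}| = \oo((\alpha\beta)^{-\infty})$. Fix an exponent $N$. Partition the eigenvalues of $A_0$ into clusters by declaring two eigenvalues equivalent when they lie within $j^{-N}$ of each other. Crucially, each cluster containing $\sigma_j(A_0)$ has at most $2k$ elements counted with multiplicity: consecutive eigenvalues on a single component $M_i$ are separated by $2\pi/\ell(M_i)$, so a cluster of width $j^{-N}$ captures at most one doublet from each $M_i$. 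Standard cluster perturbation theory for self-adjoint operators---with intra-cluster projected perturbation of norm $\oo(j^{-\infty})$ and cluster-to-complement coupling controlled through the smoothing of $K$ together with the inter-cluster gap of at least $j^{-N}$---then shows that the ordered eigenvalues of $\D$ within each cluster differ from those of $A_0$ by $\oo(j^{-N})$. Since $N$ was arbitrary, $\sigma_j(\D) = S(R)_j + \oo(j^{-\infty})$.

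\textbf{Main obstacle.}
The reduction in the second paragraph is essentially a multi-component elaboration of well-established pseudodifferential constructions and I expect it to be routine. The real difficulty lies in the third paragraph: no Diophantine hypothesis on the lengths $\ell(M_i)$ is available, so consecutive eigenvalues of $A_0$ may be arbitrarily close---for example when ratios $\ell(M_i)/\ell(M_j)$ are of Liouville type, intra-cluster gaps can beat every polynomial. The number-theoretic content is precisely the uniform bound of $2k$ on cluster sizes at every scale $j^{-N}$, independently of the arithmetic of $\{\ell(M_i)\}$. Combining this uniform cluster-size control with the rapid decay of the matrix elements of $K$ is what converts a qualitatively smoothing perturbation into a quantitative $\oo(j^{-\infty})$ bound on the individual ordered eigenvalues.
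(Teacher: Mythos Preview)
Your pseudodifferential reduction is essentially the paper's, repackaged: the paper compares $\D_\Omega$ to the Dirichlet--to--Neumann operator $\D_{\Omega_\sharp}$ of a disjoint union of topological disks $\Omega_i$ isometric to $\Omega$ near each $M_i$, invokes Lee--Uhlmann to see that the full symbol is local in the metric, and then cites Rozenblyum/Edward for each disk. Your block decomposition and the observation that off-diagonal blocks are smoothing amount to the same thing without the auxiliary domain; the diagonal claim $\D_{ii}-\sqrt{-\Delta_{M_i}}\in\Psi^{-\infty}$ is exactly what the simply-connected symbol computation gives, so this part is fine.

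Where you diverge substantially is in the passage from ``$\D-A_0$ smoothing'' to ``$\sigma_j(\D)-\sigma_j(A_0)=\oo(j^{-\infty})$''. You set up a cluster perturbation scheme and flag the absence of Diophantine control on the $\ell(M_i)$ as the main obstacle. In fact there is no obstacle: the paper proves (Lemma~2.1) a general fact that for any two bounded-below self-adjoint elliptic $\Psi$DO's $P,Q$ of positive order with $P-Q$ smoothing, one has $\lambda_j(P)-\lambda_j(Q)=\oo(j^{-\infty})$. The proof is a short min--max argument: for $f\perp\operatorname{span}(\phi_1,\dots,\phi_k)$ one bounds $\|Sf\|_{L^2}$ by writing $S\phi_j=\lambda_j^{-p}SP^{p}\phi_j$ and using that $SP^{p+N}$ is bounded on $L^2$ for every $N$, together with Weyl asymptotics. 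No spectral gaps, no clustering, no arithmetic of the $\ell(M_i)$ enter. Your cluster argument, while plausible (the $2k$ bound on cluster size is correct), is working much harder than necessary and introduces a scale-dependent partition that would need care to make rigorous; the min--max route is both shorter and completely insensitive to near-degeneracies in $\operatorname{spec}(A_0)$.
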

\noindent Theorem \ref{Theorem:MainAsymptotics} is proved in Section \ref{sec:spectral}.

Note that the sequence $S(\{ \alpha\})$ is the Steklov spectrum of a disk of radius $1/\alpha$; as a consequence, the sequence $S(R)$ is the Steklov spectrum of a disjoint
    union of $k$ disks, with radii
    $\frac{1}{2\pi}\ell(M_i)$, $i=1,\cdots,k$.

\begin{remark}
  Some related results  in higher dimensions were obtained in
  \cite{HL} (see also Theorem \ref{higher});  this  paper served as a
  motivation and a starting point for our research.
\end{remark}

Theorem \ref{Theorem:MainAsymptotics} has the following corollary, which generalizes \cite[Proposition 1.5.2]{KKP}:
\begin{corollary} For any smooth compact Riemannian surface $\Omega$ with $k$ boundary components, there is a constant $N$ depending on the metric on $\Omega$ such that for $j>N$, the multiplicity of $\sigma_j$ is at most $2k$.
\end{corollary}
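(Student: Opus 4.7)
The plan is to combine Theorem \ref{Theorem:MainAsymptotics} with a simple pigeonhole analysis of the model multiset $S(R)$. Write $\alpha_i=2\pi/\ell(M_i)$ and set $\alpha_{\min}=\min_i\alpha_i>0$, a quantity depending only on the metric.

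First I would establish the following combinatorial claim: if $0<\epsilon<\alpha_{\min}$ and $I\subset(0,\infty)$ is an interval of length $\epsilon$, then the number of elements of the multiset $S(R)$ lying in $I$, counted with multiplicity, is at most $2k$. The positive elements of $S(R)$ are of the form $\alpha_i n$ with $1\le i\le k$, $n\in\mathbb{N}$, each such value appearing with multiplicity two. Within a single arithmetic progression $\alpha_i\mathbb{N}$, consecutive values are spaced by $\alpha_i\ge\alpha_{\min}>\epsilon$, so at most one value from each $\alpha_i\mathbb{N}$ lies in $I$; summing over $i$ and accounting for the doubling gives the bound $2k$.

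Next, I would apply Theorem \ref{Theorem:MainAsymptotics}. Suppose, for contradiction, that some $\sigma_j$ has multiplicity $m>2k$, so $\sigma_j=\sigma_{j+1}=\cdots=\sigma_{j+m-1}$. Then for $0\le i\le m-1$,
$$|S(R)_{j+i}-S(R)_j|\le |S(R)_{j+i}-\sigma_{j+i}|+|\sigma_j-S(R)_j|=\oo(j^{-\infty}).$$
Hence the $m$ elements $S(R)_j,\ldots,S(R)_{j+m-1}$ of the multiset $S(R)$ all lie in a common window of length $\oo(j^{-\infty})$. Choosing $N$ large enough that this window is shorter than $\alpha_{\min}$ whenever $j>N$ (which depends only on $\alpha_{\min}$, hence only on the metric), the claim of the previous paragraph yields $m\le 2k$, a contradiction.

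The only potential obstacle is the fact that clusters of near-coincidences in $S(R)$ can occur when the $\alpha_i$ are incommensurable, so one cannot simply quote that $S(R)$ has multiplicities at most $2k$; however, the pigeonhole argument above handles this, because the spacing within each individual progression $\alpha_i\mathbb{N}$ is bounded below by $\alpha_{\min}$, uniformly in $n$. This gives the corollary with the stated dependence of $N$ on the metric.
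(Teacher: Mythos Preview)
Your argument is correct and is exactly the kind of reasoning the paper has in mind: the paper simply states that the corollary is immediate from Theorem~\ref{Theorem:MainAsymptotics} and gives no further details. Your pigeonhole observation---that any interval of length less than $\alpha_{\min}=\min_i 2\pi/\ell(M_i)$ contains at most one term from each doubled progression $\alpha_i\mathbb{N}$, hence at most $2k$ elements of $S(R)$---together with the $\oo(j^{-\infty})$ error bound is precisely what makes it ``immediate.''
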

The proof is immediate from Theorem \ref{Theorem:MainAsymptotics}.

\subsection{Spectral invariants} It follows from the standard results
of Duistermaat and Guillemin \cite{DG}  on wave trace asymptotics for
pseudodifferential operators that, in any dimension, the lengths of
closed geodesics on the boundary $M$ are invariants of the spectrum of
the Dirichlet-to-Neumann operator. For surfaces, the boundary is
one--dimensional, and the lengths of closed geodesics are simply the
integer multiples of the lengths of boundary components. However, this
information is in general not enough to determine the number of
boundary components and their lengths. For example, the wave traces of
a surface $\Omega_1$ with two boundary components of lengths $1$ and
$5$, respectively, and a surface $\Omega_2$ with three boundary
components of lengths $1$, $2$ and $3$, respectively, have the same
singularities. The perimeters of $\Omega_1$ and $\Omega_2$ are also
equal. Hence, these surfaces can not be spectrally distinguished
without  further analysis.

Instead of using the wave trace, we propose a more direct approach to the study of spectral invariants,  based on Theorem \ref{Theorem:MainAsymptotics}
and some elementary number theory. The main result of the paper is:
\begin{theorem}\label{theorem:invspec}  
The Steklov spectrum determines the number  and the lengths of  boundary components of a smooth compact Riemannian surface.
\end{theorem}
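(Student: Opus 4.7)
My plan combines Theorem~\ref{Theorem:MainAsymptotics} with an elementary number-theoretic uniqueness statement. Setting $R := \{2\pi/\ell(M_i)\}_{i=1}^k$ as a multiset of positive reals, it suffices to recover $R$ from the Steklov spectrum.

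\emph{Step 1: Reduction.} Theorem~\ref{Theorem:MainAsymptotics} gives $\sigma_j = S(R)_j + \oo(j^{-\infty})$. Since the entries of $S(R)$ lie in the discrete set $\{0\} \cup \bigcup_i r_i\mathbb{N}$ and the error is super-polynomially small, one may read off, for each $v$ in a suitable asymptotic range, the ``cluster multiplicity''
\[
m_v \;:=\; \#\{i : r_i \mid v\},
\]
which equals half the number of Steklov eigenvalues in a small neighborhood of $v$. Thus the spectrum determines the function $v \mapsto m_v$ for all $v \geq T$, for some $T$.

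\emph{Step 2: Number-theoretic uniqueness.} Suppose $R, R'$ are two finite multisets of positive reals with $m^R_v = m^{R'}_v$ for all $v \geq T$, and consider the Dirichlet series
\[
D(s) := \sum_{v > 0}\bigl(m^R_v - m^{R'}_v\bigr) v^{-s} = \zeta(s) f(s), \qquad f(s) := \sum_{r \in R} r^{-s} - \sum_{r' \in R'} (r')^{-s}.
\]
By hypothesis, $D$ is a finite Dirichlet polynomial (supported on $v < T$). To conclude that $f \equiv 0$ (whence $R = R'$ by linear independence of distinct exponentials $r^{-s}$), assume otherwise and reduce to $f(s) = \sum_{\alpha \in B} c_\alpha \alpha^{-s}$ with distinct $\alpha > 0$ and nonzero integer $c_\alpha$. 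The coefficient of $v^{-s}$ in $\zeta(s) f(s)$ at $v = N\alpha_0$, where $\alpha_0 := \min B$ and $N$ is a large prime, equals $\sum_{\alpha \in B,\, \alpha \mid N\alpha_0} c_\alpha$. For $N$ larger than every denominator arising in the (finitely many) rational ratios $\alpha_0/\alpha$ with $\alpha \in B$, divisibility $\alpha \mid N\alpha_0$ reduces to $\alpha \mid \alpha_0$; by minimality, this forces $\alpha = \alpha_0$. The coefficient is therefore $c_{\alpha_0}$, which must vanish since $D$ is a Dirichlet polynomial. Inducting on $|B|$ (peeling off the minimum at each stage) yields $c_\alpha = 0$ for all $\alpha$, contradicting $f \not\equiv 0$.

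\emph{Main obstacle.} The delicate point lies in Step 1: pointwise extraction of $m_v$ from the spectrum. For generic $R$ the intra-cluster spread $\oo(v^{-\infty})$ is much smaller than the gaps between distinct elements of $\bigcup_i r_i\mathbb{N}$ (polynomially bounded below by Diophantine considerations), so clusters are separable for all large $v$. However, Liouville-type ratios among the $r_i$ could in principle allow two distinct cluster values to coalesce at some scales, making pointwise identification of $m_v$ ambiguous. I would circumvent this by applying Step 2 only along the sparse sequence $v = N\alpha_0$ with $N$ ranging over large primes, where a density argument guarantees clean cluster separation for a subset of $N$ of full density---enough to force each coefficient $c_\alpha$ in Step 2 to vanish.
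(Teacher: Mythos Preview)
Your Dirichlet-series route is genuinely different from the paper's, and Step~2 is correct. The difficulty lies entirely in Step~1. The claim that the spectrum determines $m_v$ for \emph{all} $v\ge T$ is in general false: if some ratio $r_i/r_j$ is Liouville, two distinct cluster values can be $O(v^{-\infty})$-close infinitely often, and Theorem~\ref{Theorem:MainAsymptotics} cannot separate them. You recognize this and propose restricting to $v=N\alpha_0$ along large primes, but once you drop ``$m^R_v=m^{R'}_v$ for all $v\ge T$'' you lose the hypothesis that $D$ is a finite Dirichlet polynomial, so the bridge to Step~2 as written collapses. The argument is salvageable: by Legendre's theorem, for irrational $\beta$ any large prime $N$ with $\|N\beta\|<1/(2N)$ must itself be a convergent denominator of $\beta$, and these have density~$0$ among primes; applying this to each $\beta=\alpha_0/r$ with $r\in R\cup R'$, $r\ne\alpha_0$, shows that for density-$1$ primes $N$ the point $N\alpha_0$ is isolated in $\bigcup_{r\in R} r\mathbb N\cup\bigcup_{r'\in R'} r'\mathbb N$ by a gap exceeding $N^{-2}$. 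Combined with $|S(R)_j-S(R')_j|=O(j^{-\infty})$, this forces $m^R_{N\alpha_0}=m^{R'}_{N\alpha_0}$ for such $N$, and then your coefficient computation yields $c_{\alpha_0}=0$ directly, bypassing the ``$D$ is a polynomial'' step. So the plan can be completed, but the density argument---the actual heart of Step~1---is missing from your sketch, and the logical link between your modified Step~1 and Step~2 needs to be rewired.

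By contrast, the paper avoids cluster-by-cluster analysis entirely. It extracts $\min R$ via the identity $\min R=\limsup_j(A_{j+1}-A_j)$, proved using Dirichlet's simultaneous-approximation theorem to exhibit infinitely many gaps in $S(R)$ of size arbitrarily close to $\min R$---a statement insensitive to the Diophantine type of the $r_i$. Having found $\min R$, it removes the two elements of $A$ nearest each multiple of $\min R$, shows the remainder still admits a close almost-bijection from $S(R\setminus\{\min R\})$, and iterates. This yields a constructive recovery procedure (and the explicit formula of Corollary~\ref{maxelem}); your approach is more global and, once patched, conceptually clean, but non-algorithmic.
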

 The proof of Theorem \ref{theorem:invspec} based on Lemmas \ref{limsup} and \ref{bijection} is presented in section \ref{proofmain}.  In particular, this theorem implies that the disk is uniquely determined by its Steklov spectrum among all  bounded  smooth Euclidean domains; see Section~\ref{appl}.
Let us also state the following corollary of Lemma \ref{limsup}:
\begin{corollary}
\label{maxelem} Let $\{\sigma_j\}$ be the monotone increasing sequence of Steklov eigenvalues of a smooth compact Riemannian surface $\Omega$. Then the length $\ell_{max}$ 
of a boundary component of $\Omega$ with the largest perimeter is given by:
  $$\ell_{max}=\frac{2\pi}{\limsup_{j\rightarrow\infty}(\sigma_{j+1}-\sigma_j)}.$$
  \end{corollary}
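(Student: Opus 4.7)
The plan is to derive the formula directly from Theorem \ref{Theorem:MainAsymptotics}. Set $\alpha_i:=2\pi/\ell(M_i)$ and $\alpha_{\min}:=\min_i\alpha_i=2\pi/\ell_{\max}$. Since $\sigma_j=S(R)_j+\oo(j^{-\infty})$ and the error tends to $0$, we have
\[
\sigma_{j+1}-\sigma_j=\bigl(S(R)_{j+1}-S(R)_j\bigr)+\oo(j^{-\infty}),
\]
so the problem reduces to showing that $\limsup_{j\to\infty}\bigl(S(R)_{j+1}-S(R)_j\bigr)=\alpha_{\min}$.

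For the upper bound, I would note that $\alpha_{\min}\mathbb{N}$ appears (with multiplicity two) among the progressions whose sorted union defines $S(R)$; hence every interval of length $\alpha_{\min}$ meets the support of $S(R)$, and no spacing between consecutive entries can exceed $\alpha_{\min}$.

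For the lower bound, fix $\epsilon>0$. I would look for infinitely many $n\in\mathbb{N}$ such that the open interval $\bigl((n-1)\alpha_{\min}+\epsilon,\,n\alpha_{\min}\bigr)$ meets none of the sets $\alpha_i\mathbb{N}$ with $\alpha_i>\alpha_{\min}$. For any such $n$, every entry of $S(R)$ strictly below $n\alpha_{\min}$ is at most $(n-1)\alpha_{\min}+\epsilon$, so the spacing at the transition to $n\alpha_{\min}$ is at least $\alpha_{\min}-\epsilon$. Writing $\beta_i:=\alpha_{\min}/\alpha_i\in(0,1)$ for the $l$ indices with $\alpha_i>\alpha_{\min}$, the avoidance condition translates to the simultaneous Diophantine system
\[
\{n\beta_i\}\ge\beta_i-\epsilon/\alpha_i,\qquad i=1,\ldots,l,
\]
where $\{\cdot\}$ denotes the fractional part; this defines a box $B\subset(\mathbb{R}/\mathbb{Z})^l$ whose interior contains the point $v:=(\beta_1,\ldots,\beta_l)$.

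The key number-theoretic observation is that $v$ is itself an orbit point of the sequence $n\mapsto nv\bmod 1$ (take $n=1$), hence belongs to the closure $H$ of this orbit. By Weyl's equidistribution theorem the orbit is uniformly distributed in $H$ (which is a closed subgroup of the torus, using the standard fact that a closed sub-semigroup of a compact abelian group is already a subgroup). Since $v\in H$ lies in the interior of $B$, the intersection $B\cap H$ has positive Haar measure on $H$, so infinitely many $n$ produce a spacing of at least $\alpha_{\min}-\epsilon$; letting $\epsilon\downarrow 0$ completes the proof. The main obstacle is the lower bound and specifically the handling of arbitrary rational dependencies among the $\alpha_i$'s: the subtorus $H$ may well be proper, so one cannot aim at an arbitrary target point on $(\mathbb{R}/\mathbb{Z})^l$. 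This is circumvented by aiming instead at the actual orbit point $v$, which automatically lies in $H$ regardless of the dependencies.
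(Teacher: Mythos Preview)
Your proposal is correct and the reduction to computing $\limsup_j\bigl(S(R)_{j+1}-S(R)_j\bigr)=\alpha_{\min}$ matches the paper, but your number-theoretic argument for the lower bound is genuinely different. The paper (in the proof of Lemma~\ref{limsup}) normalizes $\alpha_{\min}=1$, splits the remaining generators into rational and irrational, chooses an integer $X$ divisible by all rational generators, and then invokes Dirichlet's simultaneous approximation theorem on the numbers $X/\alpha_i$ for the irrational $\alpha_i$ to find infinitely many $q$ with a multiple of every generator within $O(q^{-1/m})$ of $qX$; since all generators are $\ge 1$, this forces a gap of length approaching $1$ just before $qX$. Your route instead packages the problem as a rotation $n\mapsto nv$ on $(\mathbb{R}/\mathbb{Z})^l$ and exploits the observation that the target box is centred at $v$ itself, which lies on the orbit and hence in its closure $H$ irrespective of rational dependencies; equidistribution (or merely recurrence to $0$ in $H$) then gives infinitely many returns. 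Your argument is more uniform in that it avoids the rational/irrational case split, at the cost of invoking the Kronecker--Weyl machinery (orbit closures as subgroups, unique ergodicity of group rotations); the paper's argument is more elementary, resting only on the pigeonhole principle behind Dirichlet's theorem. A minor remark: what you call ``Weyl's equidistribution theorem'' is really the Kronecker--Weyl statement for rotations on a possibly proper closed subgroup, and in fact full equidistribution is more than you need---recurrence of the orbit to a neighbourhood of $0$ in $H$ already yields infinitely many $n$ with $nv$ near $v$.
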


Interestingly enough, Theorem \ref{theorem:invspec}  does not admit a straightforward generalization to higher dimensions, as the following example shows.
\begin{example}
\label{higherdim}
Consider four flat rectangular tori: $T_{1,1}=\mathbb{R}^2/\mathbb{Z}^2$,  $T_{2,1} =\mathbb{R}/2\mathbb{Z}\times\mathbb{R}/\mathbb{Z}$,
$T_{2,2}~=~\mathbb{R}^2/(2\mathbb{Z})^2$ and $T_{\sqrt{2},\sqrt{2}}= \mathbb{R}^2/(\sqrt{2}\mathbb{Z})^2$. It was shown in (\cite{DR, P})
that the disjoint union $\mathcal{T}=T_{1,1}\sqcup T_{1,1}\sqcup T_{2,2}$ is Laplace--Beltrami isospectral to the disjoint union $\mathcal{T}'=T_{2,1}\sqcup T_{2,1}\sqcup 
T_{\sqrt{2},\sqrt{2}}$. Therefore, for any $L>0$, the two disjoint unions of cylinders $\Omega_1=[0,L]\times \mathcal{T}$  and $\Omega_2=
[0,L] \times \mathcal{T}'$ are  Steklov isospectral. This follows from separation of variables  (see \cite[Lemma 6.1]{CEG}). At the same time, 
$\Omega_1$ has four boundary components of area $1$ and two  boundary components of area $4$, while $\Omega_2$ has  six 
boundary components of  area $2$.  Therefore, the areas of boundary components can not be determined from the Steklov spectrum.
\end{example}
\begin{remark}
As was mentioned above, the error estimate \eqref{Weylaw}  in dimension $n\ge 3$  is significantly weaker than \eqref{refined}.
It does not allow one to ``decouple'' the contributions of different
boundary components to the whole spectrum,  and this explains why the volumes of individual boundary components can not be recovered from the Steklov spectrum in higher dimensions.  
An attempt to prove a result of this kind was made in \cite[Theorem 1.5]{HL}; however, the proof lacked a ``decoupling'' argument. 
As Example \ref{higherdim} shows, such an argument does not exist in
dimensions $n\ge 3$. It remains to be seen whether the number of
boundary components can be determined from the Steklov spectrum in
higher dimensions.
\end{remark}
\subsection{Discussion} 
\label{appl}
One could ask whether there exist Riemannian manifolds with boundary
which are not isometric but have the same Steklov spectrum; in fact, there are at least two general constructions of such manifolds. 
The first one is based on the idea used in Example~\ref{higherdim}. Namely, cylinders of the same length over Laplace--Beltrami isospectral closed manifolds (of which there exist many examples --- see, for instance, \cite{Gor} and references therein) are Steklov isospectral.   Using this method, one can produce examples of non-isometric 
Steklov isospectral manifolds of any dimension $n\ge 3$.

In dimension two, one can use a different approach. Let  $g_1$ and
$g_2= \rho g_1$ be two conformally equivalent metrics on a surface
$\Omega$ with the conformal factor $\rho|_{\partial \Omega} \equiv
1$. Then it immediately follows from the variational principle for
Steklov eigenvalues and from the conformal invariance of the Dirichlet
energy that the Riemannian surfaces $(\Omega, g_1)$ and $(\Omega, g_2)$ are isospectral. In \cite{FS} such surfaces are referred to as {\it $\sigma$-isometric}. It is conjectured
in \cite{JS} that two Riemannian surfaces are Steklov isospectral if and only if they are $\sigma$--isometric. If true, this would imply that any smooth planar domain is uniquely determined by its Steklov spectrum, since two planar domains are
$\sigma$--isometric if and only if they are isometric.

Note that in both constructions presented above, the Steklov isospectral manifolds have Laplace isospectral boundaries. Therefore, it is natural to ask the following:
\begin{question}
\label{quest}
Do there exist Steklov isospectral manifolds with boundaries which are not Laplace isospectral?
\end{question}

In dimension $n\ge 3$ this question remains open.  At the same time, Theorem  \ref{theorem:invspec} implies that  in two dimensions the answer is negative:

\begin{corollary} Let $\Omega_1$ and $\Omega_2$ be two Steklov isospectral Riemannian surfaces. Then $\partial \Omega_1$ an $\partial \Omega_2$ are Laplace isospectral.
\end{corollary}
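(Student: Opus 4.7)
The plan is to reduce the corollary almost immediately to Theorem~\ref{theorem:invspec}, using the fact that the boundary of a compact Riemannian surface is a disjoint union of circles.

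More concretely, I would proceed as follows. First, observe that since $\Omega_i$ is a smooth compact Riemannian surface with boundary, each connected component of $\partial\Omega_i$ is a smooth closed $1$-manifold, hence isometric to a round circle of length equal to the length $\ell$ of that component. Second, recall that the Laplace spectrum of a circle of length $\ell$ is the multiset $\{0\}\cup\{(2\pi n/\ell)^2 : n\in\mathbb{N}\}\cup\{(2\pi n/\ell)^2 : n\in\mathbb{N}\}$, which is completely determined by the single number $\ell$. Consequently, the Laplace spectrum of $\partial\Omega_i$, being the disjoint union of the spectra of its components, depends only on the multiset of boundary lengths $\{\ell(M_1^{(i)}),\ldots,\ell(M_{k_i}^{(i)})\}$.

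Now apply Theorem~\ref{theorem:invspec}: the Steklov spectrum of $\Omega_i$ determines both the number $k_i$ of boundary components and their lengths. Since $\Omega_1$ and $\Omega_2$ are assumed Steklov isospectral, we conclude that $k_1=k_2$ and that the multisets of boundary lengths coincide. By the previous paragraph, this forces the Laplace spectra of $\partial\Omega_1$ and $\partial\Omega_2$ to coincide, which is exactly the desired conclusion.

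There is no real obstacle here, because all the work has been done in Theorem~\ref{theorem:invspec}; the only points requiring a sentence of care are (a) noting that in dimension two the boundary is automatically a union of circles, and (b) that a circle's length is a complete Laplace invariant. Both are standard, so the corollary is essentially a direct translation of Theorem~\ref{theorem:invspec} into the language of Laplace spectra on the boundary.
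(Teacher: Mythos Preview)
Your proposal is correct and follows exactly the same approach as the paper: invoke Theorem~\ref{theorem:invspec} to conclude that the multisets of boundary lengths coincide, and then use the elementary fact that the Laplace spectrum of a closed curve is determined by its length. The paper's proof is simply a more compressed version of what you wrote.
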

\begin{proof}
Indeed, two closed curves have the same Laplace spectrum if and only if they have the same length. The corollary then follows immediately from Theorem \ref{theorem:invspec}.
\end{proof}

Another interesting problem is to determine which Riemannian manifolds are uniquely determined by their Steklov spectrum. In \cite{PS} it is conjectured that the $n$-dimensional ball is uniquely determined by its Steklov spectrum
among all domains in $\mathbb{R}^n$.  
It is proved for $n=2$ \cite{We, e}  and $n=3$ \cite{PS} in the class of smooth Euclidean domains with connected boundary. 
Theorem \ref{theorem:invspec} allows us to remove the assumption that the boundary is connected in dimension two. In fact, we obtain a more general result.
\begin{corollary}
\label{cor}
Let $\Omega$ be a smooth orientable surface of genus zero which is Steklov isospectral to a disk of perimeter $l$. Then $\Omega$ is $\sigma$--isometric to a disk of perimeter $l$.
\end{corollary}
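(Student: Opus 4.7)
The plan is to combine Theorem~\ref{theorem:invspec} with the equality case of Weinstock's inequality in the conformal class of $g$.

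First, I would extract the topology of $\Omega$ from the spectrum. The round disk of perimeter $l$ has Steklov spectrum $S(\{2\pi/l\})$ and a single boundary component of length $l$; by Theorem~\ref{theorem:invspec}, the number and lengths of boundary components are determined by the Steklov spectrum, so $\partial\Omega$ is a single circle of length $l$. Combined with the genus-zero and orientability hypotheses, $\Omega$ is a topological disk.

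Next, set $r=l/(2\pi)$ and let $\mathbb{D}_r\subset\mathbb{R}^2$ denote the open disk of radius $r$. By the Riemann mapping theorem for compact Riemann surfaces of disk type, there exists a conformal diffeomorphism $\phi:(\Omega,g)\to(\overline{\mathbb{D}_r},g_{\mathrm{eucl}})$; write $\phi_*g = e^{2v}g_{\mathrm{eucl}}$ for a smooth function $v$ on $\overline{\mathbb{D}_r}$. The conformal invariance of the Dirichlet energy in dimension two then gives
\[
  \sigma_j(\Omega,g) \;=\; \inf_{E_j}\sup_{f\in E_j}\frac{\int_{\mathbb{D}_r}|\nabla f|^2\,dA}{\int_{\partial\mathbb{D}_r} f^2\, e^v\, ds},
\]
so the Steklov eigenvalues of $\Omega$ coincide with those of the weighted Steklov problem on the round disk with boundary weight $w:=e^v|_{\partial\mathbb{D}_r}$; in particular $\int_{\partial\mathbb{D}_r} w\,ds = l$, the boundary length of $\Omega$.

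Finally, the isospectral hypothesis gives $\sigma_1(\Omega,g)=2\pi/l$, hence $\sigma_1(w)\cdot\int w\,ds = 2\pi$. Weinstock's inequality in its weighted (conformal) form on the disk, $\sigma_1(w)\int w\,ds\le 2\pi$ with equality if and only if $w$ is constant, then forces $w\equiv 1$, so $v$ vanishes on $\partial\mathbb{D}_r$. Consequently $\phi^*g_{\mathrm{eucl}}=e^{-2v\circ\phi}\,g$ with conformal factor identically $1$ on $\partial\Omega$, which is exactly the statement that $\phi$ is a $\sigma$-isometry from $\Omega$ to the round disk of perimeter $l$. The main delicate point is invoking the sharp equality case of Weinstock's inequality in this weighted form rather than only for planar Euclidean domains; this extension is already implicit in Weinstock's Riemann mapping argument \cite{We} and is part of the conformal Steklov framework developed by Fraser--Schoen \cite{FS}.
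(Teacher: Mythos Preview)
Your proposal is correct and follows essentially the same route as the paper: both apply Theorem~\ref{theorem:invspec} to conclude that $\Omega$ has a single boundary component of length $l$, and then use the equality case of Weinstock's inequality to deduce $\sigma$-isometry with the disk. The only difference is that the paper cites this equality case as a black box from \cite{We} and \cite{FS2}, whereas you unpack the Riemann mapping and weighted-Steklov formulation explicitly.
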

\begin{proof} Indeed, as follows from Theorem \ref{theorem:invspec},
  $\Omega$ has one boundary component of length $l$.  As was shown in
  \cite[Formula (4.6)] {We} (see also \cite[Section 4]{FS2}), for
  orientable surfaces of genus zero with one boundary component of
  length $l$, $\sigma_1$  attains its maximum if and only if the
  surface is $\sigma$--isometric to a disk. This completes the proof
  of the corollary.
\end{proof}
Using  the results of \cite{FS}, similar rigidity statements can be
proved for the critical catenoid and the critical M\"obius band (see
\cite{FS, FS2} for the definitions of these surfaces).
\begin{remark}
One can also show that the disk is uniquely determined by its Steklov
spectrum among all simply connected planar domains with $C^1$
boundaries. Indeed, the one-term Weyl asymptotics hold in this case
\cite{Agr}, and Weinstock's inequality is true under even more general
assumptions \cite{GP}.  It would be interesting to prove Theorem
\ref{theorem:invspec} for non-$C^\infty$ boundaries.  This requires new
methods, since the corresponding Dirichlet-to-Neumann operator would
no longer be pseudodifferential in this case.
\end{remark}

Finally, one may also ask if orientability is an invariant of the  Steklov spectrum --- for instance,  whether one can always  distinguish between a M\"obius band and a topological disk of the same perimeter. 
Both surfaces have one boundary component, and therefore Theorem  \ref{theorem:invspec} is not sufficient to tell them apart.
%\subsection{Outline} We first discuss some preliminaries on the spectra of two psuedodifferential operators which differ by a smoothing operator. In section \ref{sec:spectral}, we prove Theorem \ref{Theorem:MainAsymptotics}, and in %section \ref{sec:detlength}, we prove Lemma \ref{lemma:close}. As above, these two results immediately imply Theorem \ref{theorem:invspec}.

\subsection*{Acknowledgments} The authors would like to thank
A. Granville, P. Hislop, and L.~Pol\-tero\-vich for useful discussions.
The research of A.G. was partially supported by a  startup grant
from Universit\'e Laval. The research of L.P. was partially supported
by the EPSRC grant EP/J016829/1. The research of I.P. was partially
supported by NSERC, FRQNT, and the Canada Research Chairs program. The
research of D.S. was partially  supported by the CRM-ISM postdoctoral
fellowship and the NSF grant 1045119.

\section{Proofs}
\subsection{Smoothing perturbations}
\label{sec:prelim}
Recall that a pseudodifferential operator $S$ on a Riemnnian manifold $M$ is called \emph{smoothing} if it has a smooth integral kernel \cite{Treves}; it is a standard fact that smoothing operators form an ideal in the algebra of pseudodifferential operators. Additionally, smoothing operators are bounded as maps from $H^s(M)$ to $H^t(M)$ for any $s$ and $t\in\mathbb R$, so in particular are bounded operators on $L^2(M)$. We will make use of the following well-known result, which is part of
the folklore of the theory of pseudodifferential operators. 
\begin{lemma}\label{lemma:smoothingdiff}
  Let $M$ be a compact manifold of dimension $n$. Let $P$ and $Q$ be elliptic,
  bounded below, self-adjoint pseudodifferential operators on $M$ of
  order $m>0$. If  the difference $P-Q$ is a smoothing operator, then
  the eigenvalues of $P$ and $Q$ satisfy
  $$\lambda_j(P)-\lambda_j(Q)=\oo(j^{-\infty}).$$
\end{lemma}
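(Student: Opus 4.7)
The plan is to reduce the eigenvalue comparison to a quasi-mode argument, exploiting the $C^\infty$ smoothness of the Schwartz kernel of $R := P - Q$: if $R$ is smoothing, then the eigenfunctions of $Q$ should be essentially eigenfunctions of $P$ at high frequencies, because a smoothing operator is invisible to rapidly oscillating functions.

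First I would establish the quasi-mode estimate
$$\|R\phi_j^Q\|_{L^2} = \oo(j^{-\infty}),$$
where $\phi_j^Q$ is the $L^2$-normalized eigenfunction of $Q$ corresponding to $\lambda_j(Q)$.  Since $R$ is smoothing, it extends to a bounded map $H^{-s}(M)\to L^2(M)$ for every $s\ge 0$.  On the other hand, the ellipticity of $Q$ combined with the spectral theorem gives $\|\phi_j^Q\|_{H^{-s}}\asymp (1+\lambda_j(Q))^{-s/m}$, and Weyl's law for elliptic self-adjoint operators of order $m$ yields $\lambda_j(Q)\asymp j^{m/n}$.  Composing,
$$\|R\phi_j^Q\|_{L^2}\le \|R\|_{H^{-s}\to L^2}\,\|\phi_j^Q\|_{H^{-s}} = \oo(j^{-s/n})$$
for every fixed $s$, which yields $\oo(j^{-\infty})$ upon letting $s\to\infty$.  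The symmetric argument (with eigenfunctions $\phi_j^P$ of $P$ and $Q = P-R$) gives the same bound with the roles of $P$ and $Q$ interchanged.

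Using the identity $(P-\lambda_j(Q))\phi_j^Q = R\phi_j^Q$, the spectral theorem for $P$ then produces some eigenvalue $\tilde\lambda_j\in\sigma(P)$ with $|\tilde\lambda_j - \lambda_j(Q)|=\oo(j^{-\infty})$, and vice versa.  The main obstacle will be matching this pairwise closeness to the natural increasing orderings used to index $\lambda_j(P)$ and $\lambda_j(Q)$: \emph{a priori} the assignment $\lambda_j(Q)\mapsto \tilde\lambda_j$ need not preserve order or be injective, and one might worry that $P$ could have ``extra'' eigenvalues not matched by any $\lambda_j(Q)$.  To handle this I would invoke a spectral-projection and dimension-counting argument: the $L^2$-orthogonality of $\{\phi_j^Q\}$ together with the smallness of the residuals forces the projections of $\phi_1^Q,\dots,\phi_N^Q$ into the spectral subspaces of $P$ corresponding to intervals of fixed width about each $\lambda_j(Q)$ to remain nearly orthonormal, which bounds the counting-function discrepancy $|N_P(\lambda)-N_Q(\lambda)|$ uniformly in $\lambda$.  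The two-sided quasi-mode estimate then upgrades this uniform bounded discrepancy to an exact match for $j$ sufficiently large, and the conclusion $\lambda_j(P)-\lambda_j(Q)=\oo(j^{-\infty})$ follows.  Converting the uniform bounded counting discrepancy into an exact match that respects the natural increasing order is the most delicate point of the proof.
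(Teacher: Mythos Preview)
Your quasi-mode estimate $\|R\phi_j^Q\|_{L^2}=\oo(j^{-\infty})$ is correct and is in fact essentially the same core computation the paper carries out (the paper writes it as a bound on $\|Sf\|_{L^2}$ for $f$ orthogonal to the first $k$ eigenfunctions of $P$, using that $SP^{p+N}$ is bounded on $L^2$; your Sobolev-norm formulation is equivalent). The divergence is in what you do with it. You pass through a spectral-concentration/counting argument and then assert that ``the two-sided quasi-mode estimate upgrades this uniform bounded discrepancy to an exact match for $j$ sufficiently large.'' That step is not justified and, as stated, is not right: a uniformly bounded gap $|N_P(\lambda)-N_Q(\lambda)|\le C$ together with two-sided quasi-mode closeness does \emph{not} force $N_P=N_Q$ eventually. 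What the orthogonality/projection argument actually gives (if you carry it out carefully) is the inequality $N_P(\lambda+\epsilon_\lambda)\ge N_Q(\lambda)$ with $\epsilon_\lambda\to 0$, and symmetrically, which one then has to convert into $|\lambda_j(P)-\lambda_j(Q)|=\oo(j^{-\infty})$; this can be done, but it is more work than your sketch suggests and is not the same as an ``exact match'' of counting functions.

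The paper avoids this entire matching problem by using the min--max principle from the start. Writing $S=Q-P$ and letting $E_k$ be the span of the first $k$ eigenfunctions of $P$, one has
\[
\lambda_{k+1}(P)-\lambda_{k+1}(Q)\ \le\ \max_{f\perp E_k,\ \|f\|=1}|\langle Sf,f\rangle|\ \le\ \max_{f\perp E_k,\ \|f\|=1}\|Sf\|_{L^2},
\]
and the right-hand side is then bounded by $C_p\lambda_k^{-p}$ using exactly the smoothing-operator estimate you already have (applied to a Fourier expansion of $f$ in the $\phi_j^P$ with $j>k$). Symmetrizing in $P$ and $Q$ finishes the proof. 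The point is that min--max already compares eigenvalues with the \emph{same} index $j$, so no pairing or counting argument is needed. I would recommend replacing your third step with this variational inequality; your first step then slots in directly as the required bound on $\|Sf\|_{L^2}$.
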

Since we could not locate a proof in the literature with the desired
level of generality, we have included one here for
convenience. The argument  is based loosely on the proof of the first theorem in \cite[Section 5]{HL}; see also~\cite{e} for a related result.

\begin{proof}[Proof of Lemma~\ref{lemma:smoothingdiff}]
  It follows from the spectral theorem (see
  \cite[Theorem 8.3, p. 71]{sh}) that both $P$ and $Q$ have
  bounded below discrete spectra, which consist entirely of
  eigenvalues, each with finite multiplicity, and there are
  corresponding complete orthonormal bases of $L^2(M)$. 
  Suppose that  the functions $\phi_j$ form an orthonormal basis associated to
  the eigenvalues $\lambda_j(P)$, with $j\geq 1$. Let $E_k\subset
  L^2(M)$ be the span of the first $k$ eigenfunctions
  $\phi_1,\cdots,\phi_k$.
  By hypothesis, the operator $S=Q-P$ is smoothing.
  It follows from the variational characterizations of eigenvalues that
  \begin{align*}
    \lambda_{k+1}(Q)=\lambda_{k+1}(P+S)
    \geq \min_{f\perp E_k,\|f\|=1}
    \left(
      \langle
      Pf,f\rangle+\langle Sf,f\rangle
    \right)
    \geq \lambda_{k+1}(P)-\max_{f\perp E_k,\|f\|=1}
    |\langle Sf,f\rangle|.
  \end{align*}
  In other words,
  \begin{gather}\label{Inequality:SmoothingDifferenceEigenvalues}
    \lambda_{k+1}(P)-\lambda_{k+1}(Q)\leq
    \max_{f\perp E_k,\|f\|=1}|\langle Sf,f\rangle|\leq\max_{f\perp E_k,\|f\|=1}||Sf||_{L^2}.
  \end{gather}
 We may write any such $f$ as a Fourier series: $f=\sum_{j>k}f_j\phi_j$, where $f_j=\langle f,\phi_j\rangle$. Then for any positive integer $p$, \[||Sf||_{L^2}\leq \sum_{j>k}|f_j|\cdot ||S\phi_j||_{L^2}=\sum_{j>k}|f_j|\lambda_j^{-p}||SP^p\phi_j||_{L^2}\leq\lambda_{k}^{-p}\sum_{j>k}|f_j|\cdot ||SP^p\phi_j||_{L^2}.\]
 By Cauchy-Schwarz and Plancherel's theorem, and the fact that $f$ has norm 1,
 \begin{equation}||Sf||_{L^2}\leq\lambda_k^{-p}(\sum_{j>k}||SP^p\phi_j||_{L^2}^2)^{1/2}\leq\lambda_k^{-p}(\sum_{j=1}^{\infty}||SP^p\phi_j||_{L^2}^2)^{1/2}. \end{equation}
 Note that the bound is independent of $f$. Now observe that for any $N$, $||SP^p\phi_j||_{L^2}^2=\lambda_j^{-2N}||SP^{p+N}\phi_j||_{L^2}^2$. Since $S$ is smoothing, so is $SP^{p+n}$ (by the ideal property of smoothing operators). As a consequence, the operator $SP^{p+N}$ is bounded from $L^2$ to $L^2$ for any $N$, and hence $||SP^{p+N}\phi_j||_{L^2}$ is bounded by a constant $K_{p,N}$ independent of $j$. We have for any $p$ and $N$, and any $f$ as above:
 \[||Sf||_{L^2}\leq\lambda_k^{-p}K_{p,N}(\sum_{j=1}^{\infty}\lambda_{j}^{-2N})^{1/2}.\]
 By the Weyl asymptotics, the sum on the right-hand-side is finite for sufficiently large $N$. Fix such a large $N$; then there is a constant $C_p$ depending only on $p$ (and not on $f$) such that $||Sf||_{L^2}\leq C_p\lambda_k^{-p}.$ Using the Weyl asymptotics again, we obtain that for any $p$, there is a $C_p$ such that
\[ \lambda_{k+1}(P)-\lambda_{k+1}(Q)\leq\max_{f\perp E_k,\|f\|=1}||Sf||_{L^2}\leq C_pk^{-mp/n}.\]
Since $m>0$ and the roles of $P$ and $Q$ are symmetric in this argument, this completes the proof. 
\end{proof}

\subsection{Proof of Theorem~\ref{Theorem:MainAsymptotics}}\label{sec:spectral}
  For each $i=1,\cdots,k$, let $\Omega_i$ be a
  topological disk with a Riemannian metric which is isometric to $\Omega$ in a neighborhood of
  the boundary component $M_i$. Let $\Omega_\sharp$ be the disjoint
  union of the disks $\Omega_i$. In other words, $\Omega_\sharp$ is obtained by
  keeping a collar neighborhood of each boundary curve $M_i$ and
  capping it by smoothly gluing a disk. 
  Since $\Omega$ and $\Omega_\sharp$ are isometric in a neighborhood of their
  common boundary $M$, it follows from~\cite[Section 1]{lu} that
  the Dirichlet--to--Neumann operators  $\D_\Omega,
  \D_{\Omega_\sharp}\in OPS^1(M)$ have the same full symbol. In other
  words, the difference $\D_\Omega-\D_{\Omega_\sharp}$ is a smoothing operator.
  From Lemma~\ref{lemma:smoothingdiff}, it follows that
  $$\sigma_j(\Omega)-\sigma_j(\Omega_\sharp)=\oo(j^{-\infty}).$$

  It follows from the Riemann mapping theorem that for each
  $i=1,\cdots,k$, the disk $\Omega_i$ is
  conformally equivalent to the unit disk $\mathbb{D}$. Since
  $\partial\Omega_i$ is smooth, this implies that
  $\overline{\Omega_i}$ is isometric to $(\overline{\mathbb{D}},\delta_i^2g_0)$,
  where $g_0$ is the Euclidean metric and $\delta_i\in
  C^\infty(\overline{\mathbb{D}})$ is a smooth positive function on the closure
  of $\mathbb{D}$.
  In the coordinates provided by this isometry, the Steklov eigenvalue
  problem becomes
  $$\Delta u=0,\quad \partial_\nu u=|\delta_i|\sigma u.$$
  Since $\int_0^{2\pi}|\delta_i|=l_i$ is the length of the boundary
  component $M_i$, it follows from~\cite[Corollary 1]{Ro} that 
  $$\sigma_j(\mathbb{D},\delta_i)=S\left(\left\{\frac{2\pi}{\ell(M_i)}\right\}\right)_j+\oo(j^{-\infty}).$$
  This completes the proof of Theorem~\ref{Theorem:MainAsymptotics}.

\begin{remark}
  The result in~\cite{Ro} is based on a local coordinate
  computation of the full symbol of the Dirichlet--to--Neumann map
  using a graph parametrization of the boundary. For a 
  different approach using a conformal equivalence to the upper
  half-plane, see~\cite{e}.
\end{remark}

%\begin{remark}
%\label{higher}
The first part of the proof of Theorem \ref{Theorem:MainAsymptotics} admits a straightforward extension to higher dimensions, yielding the following theorem which can be viewed 
as the main result of \cite{HL}:
\begin{theorem} Suppose that $\Omega_1$ and $\Omega_2$ are smooth compact Riemannian manifolds with boundary $M_1$ and $M_2$ respectively; let their Steklov eigenvalues be $\{\sigma_j(\Omega_1)\}$ and $\{\sigma_j(\Omega_2)\}$ respectively. Assume there exists an isometry $\phi$ between a neighborhood $U_1$ of $M_1$ in $\Omega_1$ and a neighborhood $U_2$ of $M_2$ in $\Omega_2$ for which $\phi(M_1)=M_2$. Then 
\[\sigma_j(\Omega_1)-\sigma_j(\Omega_2)=\mathcal O(j^{-\infty}).\]
\label{higher}
\end{theorem}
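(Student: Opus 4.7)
The plan is to mimic the first half of the proof of Theorem~\ref{Theorem:MainAsymptotics}, which never used the fact that $\Omega$ is two-dimensional. The strategy is to transfer both Dirichlet-to-Neumann operators to a common boundary manifold and then invoke Lemma~\ref{lemma:smoothingdiff}.

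First, I would use the isometry $\phi$ to identify $M_1$ with $M_2$, so that both DtN operators $\D_{\Omega_1}$ and $\D_{\Omega_2}$ can be regarded as pseudodifferential operators of order one acting on the same closed $(n-1)$-manifold $M := M_1 \cong_\phi M_2$. Since $\phi$ is an isometry of collar neighborhoods of the boundaries, the metrics pulled back to $M$ and extended inward agree in a one-sided neighborhood of $M$. Next I would invoke the cited result of Lee--Uhlmann \cite{lu}: the full symbol of the Dirichlet-to-Neumann operator is determined by the Taylor expansion of the metric at the boundary, and therefore depends only on the metric in an arbitrarily small neighborhood of $M$. Consequently $\D_{\Omega_1}$ and $\D_{\Omega_2}$ have the same full symbol, so their difference
\[
S := \D_{\Omega_1} - \D_{\Omega_2}
\]
is a pseudodifferential operator of order $-\infty$, i.e.\ a smoothing operator on $M$.

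Now both $\D_{\Omega_i}$ are elliptic, self-adjoint pseudodifferential operators on $M$ of positive order $m=1$, and each is bounded below (in fact nonnegative, by the variational characterization of Steklov eigenvalues). Hence the hypotheses of Lemma~\ref{lemma:smoothingdiff} are satisfied with $P = \D_{\Omega_1}$, $Q = \D_{\Omega_2}$, and the lemma directly yields
\[
\sigma_j(\Omega_1) - \sigma_j(\Omega_2) = \lambda_j(P) - \lambda_j(Q) = \mathcal{O}(j^{-\infty}),
\]
which is exactly the desired conclusion.

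I do not anticipate any serious obstacle: the argument is essentially a packaging of two ingredients already stated and used in the paper, namely the Lee--Uhlmann determination of the full symbol of $\D$ from the boundary jet of the metric, and Lemma~\ref{lemma:smoothingdiff}. The only point worth checking carefully is the identification of the two DtN operators as acting on the same manifold via $\phi$, so that ``same symbol'' and hence ``smoothing difference'' make sense; this is routine because $\phi$ is a smooth isometry and pulls back the symbol computation on one side to the other.
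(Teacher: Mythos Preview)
Your proposal is correct and is exactly the argument the paper intends: it states that the first part of the proof of Theorem~\ref{Theorem:MainAsymptotics} (Lee--Uhlmann to get equal full symbols, hence a smoothing difference, then Lemma~\ref{lemma:smoothingdiff}) extends verbatim to higher dimensions, and your write-up simply spells this out. The only additional care you take---identifying $M_1$ with $M_2$ via $\phi$ so the two DtN operators live on the same manifold---is the appropriate bookkeeping.
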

%\end{remark}

\subsection{Proof of Theorem \ref{theorem:invspec}}\label{proofmain}
Let  $\sigma=\{\sigma_j\}$ be the monotone increasing sequence of Steklov eigenvalues of a surface with boundary $\Omega$. 
Let $R=\left\{\frac{2\pi}{\ell_1},\cdots,\frac{2\pi}{\ell_k}\right\}$ be a finite multiset, such that $\ell_1, \dots \ell_k$ are
the lengths of the boundary components of $\Omega$. It follows from Theorem~\ref{Theorem:MainAsymptotics} that $\sigma_j-S(R)_j=\oo(j^{-\infty})$. 
In order to prove Theorem \ref{theorem:invspec} we describe an inductive  procedure which allows one to
determine the multiset $R$ from the infinite sequence $\sigma$.  
A problem of independent interest (which we do not address here) is to find a 
practical implementation of the proposed algorithm, that could be used to find the lengths of the boundary components of a surface from its Steklov spectrum
with high precision.

In what follows, we deal only with countable multisets of non-negative real numbers.  Somewhat abusing notation, we identify such a multiset with the 
monotone increasing sequence of its elements. In particular, mappings between multisets are understood as mappings between the corresponding sequences.

Given two multisets of positive real numbers $A$ and $B$, we say that a  
mapping $F:A\rightarrow B$ is \emph{close} if it has the property that
for each $\epsilon>0$, there are only finitely many $x\in A$ with
$|F(x)-x|\geq\epsilon$. We say that $F$ is an \emph{almost-bijection}
if for all but finitely many $y\in B$ the pre-image $F^{-1}(y)$
consists of one point.

\begin{lemma}\label{limsup} 
  Let $A=\{A_j\}$ be a multiset for which there exists a close
  almost-bijection $F:S(R)\rightarrow A$ for some finite multiset $R$
  of positive numbers. Then the smallest element of $R$ is
  $L=\limsup_{j\rightarrow\infty}(A_{j+1}-A_j)$.
\end{lemma}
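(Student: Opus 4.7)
Proof proposal. My plan is first to show that $\limsup(S(R)_{j+1}-S(R)_j)$ equals the minimum $\alpha_1$ of $R$, and then to transfer this to $A$ via the close almost-bijection $F$.

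For the transfer (the easier half): the closeness condition gives $|F(s_j)-s_j|\to 0$, while almost-bijectivity gives that, discarding finitely many elements from $S(R)$ and from $A$, $F$ restricts to a genuine bijection between the two tails. An $\epsilon$-close bijection between tails of monotone sequences forces $|A_j-S(R)_j|<\epsilon$ for all sufficiently large $j$ (compare counting functions, or observe that sorting an $\epsilon$-perturbation of a multiset produces an $\epsilon$-perturbation of the sorted sequence). It follows that $\limsup(A_{j+1}-A_j)=\limsup(S(R)_{j+1}-S(R)_j)$, and it remains to compute the latter.

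Write $R=\{\alpha_1\le\cdots\le\alpha_k\}$ (possibly with repeats, which only duplicate multiplicities and do not enlarge the support). The upper bound $\limsup(S(R)_{j+1}-S(R)_j)\le\alpha_1$ is immediate: since $\alpha_1\mathbb{N}$ lies in the support of $S(R)$, any open interval of length strictly greater than $\alpha_1$ must contain a multiple of $\alpha_1$, so consecutive distinct values of $S(R)$ cannot be further apart than $\alpha_1$; the zero gaps between repeated values do not affect the limsup.

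For the matching lower bound set $\gamma_i=\alpha_1/\alpha_i\in(0,1]$, assuming WLOG $\gamma_i<1$ for $i\ge 2$ (duplicates of $\alpha_1$ in $R$ add no new points to the support). By Dirichlet's simultaneous approximation theorem (equivalently, Kronecker's theorem on the closure of a torus orbit), for every $\delta>0$ there are infinitely many positive integers $n$ with $\|n\gamma_i\|<\delta$ simultaneously for all $i=2,\dots,k$, where $\|\cdot\|$ denotes the distance to the nearest integer. For such an $n$ and each $i\ge 2$, two cases arise: if $\{n\gamma_i\}<\delta$ then no multiple of $\alpha_i$ lies in the open cell $(n\alpha_1,(n+1)\alpha_1)$, whereas if $\{n\gamma_i\}>1-\delta$ then the unique multiple of $\alpha_i$ in that cell sits at distance $(1-\{n\gamma_i\})\alpha_i<\delta\alpha_i$ from $n\alpha_1$. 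Either way, every point of $\bigcup_{i}\alpha_i\mathbb{N}$ in the open cell is within $\delta\max_i\alpha_i$ of $n\alpha_1$, leaving a gap of size at least $\alpha_1-\delta\max_i\alpha_i$ before $(n+1)\alpha_1$. Letting $\delta\to 0$ yields $\limsup(S(R)_{j+1}-S(R)_j)\ge\alpha_1$, completing the computation.

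The main obstacle is the lower bound: it rests on a simultaneous Diophantine input to align the phases $\{n\gamma_i\}$ close to integers, combined with a case analysis showing that near-integer phases force the multiples of $\alpha_i$ either to exit the cell or to cluster at its left endpoint. The transfer step from $A$ to $S(R)$ is routine but deserves care, since $F$ is not assumed order-preserving and its hypotheses are phrased set-theoretically rather than at the level of the sorted sequences.
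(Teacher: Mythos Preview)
Your proof is correct and follows the same overall strategy as the paper: first transfer the gap-$\limsup$ from $A$ to $S(R)$ via the close almost-bijection, then show $\limsup_j(S(R)_{j+1}-S(R)_j)=\alpha_1$ using simultaneous Diophantine approximation for the lower bound. The execution of the Diophantine step, however, differs. The paper separates the generators into rational and irrational ones: it takes an integer $X$ divisible by the numerators of all rational $\alpha_i$, so that every $qX$ is already a multiple of each rational generator, and then applies Dirichlet's simultaneous approximation only to the numbers $X/\alpha_i$ for the irrational generators, producing infinitely many $q$ for which a multiple of each generator lies within $O(q^{-1/m})$ of $qX$; this leaves a gap of length close to $1$ just below $qX$. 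Your version is more uniform: by passing to the ratios $\gamma_i=\alpha_1/\alpha_i$ and applying Dirichlet (or Kronecker) to all of them at once, you avoid the rational/irrational split entirely---rational $\gamma_i$ satisfy $\|n\gamma_i\|<\delta$ infinitely often just as well---and your cell-by-cell case analysis on $\{n\gamma_i\}$ is a tidy way to see that the multiples either miss the interval $(n\alpha_1,(n+1)\alpha_1)$ or cluster at its left endpoint. This buys a slightly cleaner argument at no cost. Your transfer step is also packaged differently: you invoke the fact that an $\epsilon$-close bijection of multisets induces an $\epsilon$-close correspondence of the sorted sequences, whereas the paper argues directly that a gap of length $L-\epsilon$ in $S(R)$ forces a gap of length at least $L-3\epsilon$ in $A$; both are routine.
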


Now assume without loss of generality that $L=1$ (otherwise we may
divide all elements by $L$). Let $\hat R$ be $R$ with a $1$
removed. We construct a new multiset $\hat A$ as follows. Let $N_0 $ be a
(large) number such that for any natural number $j>N_0$ there exists at least two elements of $A$ at a distance
less than $1/10$ from $j$.  Denote by $G_1(j)$ and $G_2(j)$
the two elements of $A$ which are closest to $j$ (in case of ties, we start
by choosing the largest).
Set
\begin{equation}
\label{hat}
\hat A:=A\setminus\cup_{j>N_0}\{G_1(j),G_2(j)\}.
\end{equation}
We claim
\begin{lemma}\label{bijection} If $\hat A$ is infinite, there is a close almost-bijection $\hat F:S(\hat R)\rightarrow \hat A$.
\end{lemma}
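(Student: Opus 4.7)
The plan is to construct $\hat F$ by restricting a locally permuted version of the given close almost-bijection $F: S(R) \to A$ to the submultiset $S(\hat R)$. Since $1$ is the smallest element of $R$ (appearing with some multiplicity $m \geq 1$), deleting one copy yields the multiset decomposition
\[
S(R) = S(\hat R) \sqcup \{0\} \sqcup \mathbb N \sqcup \mathbb N,
\]
the excess $\{0\} \sqcup \mathbb N \sqcup \mathbb N$ being the contribution of the deleted copy of $1$. The goal is to arrange, by a local modification of $F$, that for each large integer $j$ the two copies of $j$ in the excess are mapped precisely onto $\{G_1(j), G_2(j)\}$; then the restriction of the modified map to $S(\hat R)$ will land in $\hat A$ up to finitely many corrections.

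For the modification, I fix a large threshold $N_1 \geq N_0$ such that (i) $|F(x) - x| < \eta$ for every $x \in S(R)$ lying within distance $1$ of any integer $j > N_1$, using closeness, and (ii) every $y \in A$ within $1/10 + \eta$ of such an integer has a single preimage under $F$, using the almost-bijection property. For each $j > N_1$, the set of $x \in S(R)$ in a small neighborhood of $j$ contains both copies $j_1, j_2$ of $j$ from the excess; by (i) its $F$-image stays in a similarly small neighborhood; by (ii), every element of $A$ near $j$ — including $G_1(j), G_2(j)$, whose distance from $j$ is less than $1/10$ — has its unique preimage in that neighborhood. Hence $F$ restricts to a bijection between the clusters in $S(R)$ and $A$ near $j$, and permuting this finite bijection so that $j_1 \mapsto G_1(j)$ and $j_2 \mapsto G_2(j)$ alters the values of $F$ only by a bounded amount within this small neighborhood, yielding a modified map $F'$ that remains a close almost-bijection.

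Finally, define $\hat F := F'|_{S(\hat R)}$. By construction, $F'(S(R) \setminus S(\hat R)) = \{F'(0)\} \cup \{G_1(j), G_2(j) : j > N_1\}$, which differs from the removed set $\{G_1(j), G_2(j) : j > N_0\}$ used to define $\hat A$ only by a finite set. Consequently $\hat F$ maps $S(\hat R)$ into $\hat A$ outside finitely many exceptions, which I correct by reassigning $\hat F$ on the corresponding finite set of inputs to cover the stray elements on each side. The resulting map $S(\hat R) \to \hat A$ is a close almost-bijection: closeness is inherited from that of $F'$, and the almost-bijection property holds because $F'$ is a bijection outside a finite set and the finite correction preserves this. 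The main obstacle is controlling the cluster structure near each large integer, which requires closeness and the almost-bijection property of $F$ in tandem — closeness bounds the forward displacement of $j_1, j_2$, while uniqueness of preimages (from the almost-bijection property) confines the preimages of $G_1(j), G_2(j)$ to the same small neighborhood, making the local permutation well-defined and value-preserving up to $O(\eta)$.
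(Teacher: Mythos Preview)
Your approach is essentially the paper's: both rearrange $F$ locally near each large integer $j$ so that the two excess copies of $j$ (from the deleted copy of $1$ in $R$) land on $G_1(j),G_2(j)$, then restrict to $S(\hat R)$ and fix up finitely many leftovers. The paper carries this out as a direct case-by-case definition of $\hat F$ on $S(\hat R)$, whereas you first modify $F$ to $F'$ on all of $S(R)$ and then restrict; the substance is the same.

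One point needs tightening. You assert that the local permutation is ``value-preserving up to $O(\eta)$'' and conclude that $F'$ is close, but $\eta$ is a fixed constant, so an $O(\eta)$ bound on $|F'(x)-x|$ for infinitely many $x$ does \emph{not} give closeness. What is actually true (and what you need) is that the diameter of the cluster on which you permute tends to zero as $j\to\infty$: closeness of $F$ forces $|F(j_i)-j|\to 0$, so the two closest points $G_1(j),G_2(j)$ of $A$ satisfy $|G_i(j)-j|\to 0$, and then their $F$-preimages are within $o(1)$ of $j$ as well. Hence the permutation moves points by $o(1)$, and closeness of $F'$ follows. The paper states exactly this vanishing, $\lim_{j\to\infty}|G_i(j)-j|=0$, and uses it in the same way. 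With that correction your argument goes through.
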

Assuming these two lemmas we may argue as follows.
Note that the monotone increasing sequence $\sigma=\{\sigma_j\}$ of Steklov eigenvalues, viewed as a multiset, satisfies the
hypothesis of  Lemma  \ref{limsup}.  Indeed,  by Theorem \ref{Theorem:MainAsymptotics}, the map from $S(R)$ to $\sigma$
which takes $S(R)_j$ to $\sigma_j$ is a close bijection between sequences. 
Therefore, 
$$\frac{2\pi}{\ell_{max}}=\limsup_{j \to \infty} (\sigma_{j+1}-\sigma_j),$$
where $\ell_{max}=\max\{l_1,\dots,l_k\}$.   In particular,  this proves Corollary \ref{maxelem}.
Applying Lemma \ref{bijection} to $\hat \sigma$ and combining it  with Lemma \ref{limsup} allows one  to find the smallest element of $\hat R$, which is the second-smallest element of $R$; it corresponds to the second largest boundary component. 
Repeating this construction until there remains only a finite number of elements from the original sequence $\sigma$, we can
find all of $R$.  Therefore, the number of boundary components  as well as their lengths are 
uniquely determined by $\sigma$, and the proof of Theorem~\ref{theorem:invspec} is complete. It remains to prove the lemmas.

\bigskip

\emph{Proof of Lemma \ref{limsup}:}
Let $R=\{\alpha_1,\dots,\alpha_k\}$, $0<\alpha_1 \le \alpha_2 \le \dots \le \alpha_k$  be  a multiset such that $F:S(R) \to A$ is a 
close almost-bijection. Without loss of generality, we may assume that $\alpha_1=1$. We first claim that 
\[\limsup_{j}(S(R)_{j+1}-S(R)_j)=\limsup_j(A_{j+1}-A_j).\]
To show this, pick any $\epsilon>0$. Since we are worried only about the limsup, we may ignore any finite number of terms and therefore we assume that $F$ is in fact a bijection. Since $F$ is close, there exists $N\in\mathbb N$ for which $j\geq N$ implies $|F(S(R)_{j})-S(R)_{j}|<\epsilon$. Let $L=\limsup_j(S(R)_{j+1}-S(R)_j)$. Then there are infinitely many $j\geq N$ for which $S(R)_{j+1}-S(R)_j\geq L-\epsilon$ and hence there are no elements of $S(R)$ in the interval $[S(R)_j,S(R)_{j+1}]$ of length at least $L-\epsilon$. But then there are no elements of $A$ in $[S(R)_j+\epsilon,S(R)_{j+1}-\epsilon]$, an interval of length at least $L-3\epsilon$. Since there are infinitely many such $j$, we have $\limsup_j(A_{j+1}-A_j)\geq L-3\epsilon$; since $\epsilon$ was arbitrary, $\limsup_j(A_{j+1}-A_j)\geq\limsup_{j}(S(R)_{j+1}-S(R)_j)$. Interchanging $S(R)$ and $A$ and arguing in the same way we get the opposite inequality,   proving the claim.

We are now ready to prove the lemma. Let $X$ be an 
integer which is a multiple of the numerators of all rational
generators $\alpha_i$, and let $\{\alpha_{i_1},\dots,\alpha_{i_m}\}$, $1< i_1\le \dots \le i_m \le k$,   be the multiset
of irrational generators.  Denote by  $\alpha=\alpha_{i_m}$ the largest of all  irrational generators $\alpha_{i_n}$, $n=1,\dots, m$. By Dirichlet's theorem on simultaneous approximation,
applied to the irrational numbers $X/\alpha_{i_n}$, there is an infinite
set of positive integers $K\subset\mathbb{N}$ such that for each $q\in
K$ and each $\alpha_{i_n}$, there exists an integer $p_{q,n}$ with
\[|\frac{X}{\alpha_{i_n}}-\frac{p_{q,n}}{q}|<\frac{1}{q^{1+1/m}}.\]
Rearranging, we have
\[|qX-p_{q,n}\alpha_{i_n}|<\frac{\alpha_{i_n}}{q^{1/m}}.\]
In particular, for each $q\in K$, there is a multiple of each irrational generator $\alpha_{i_n}$ in the interval $[qX-\frac{\alpha_{i_n}}{q^{1/m}}, qX+\frac{\alpha_{i_n}}{q^{1/m}}]$, and hence in the larger interval $[qX-\alpha/{q^{1/m}},qX+\alpha/{q^{1/m}}]$. Since $qX$ is itself a multiple of each rational generator, there is in fact a multiple of each generator in $[qX-\alpha/{q^{1/m}},qX+\alpha/{q^{1/m}}]$. Since each generator is bigger than 1, there must be no multiples of any
generator, and hence no elements of $S(R)$, in the interval
$[qX-1+\alpha/q^{1/m},qX-\alpha/q^{1/m}]$. Since $K$ is infinite, we conclude that
\[\limsup_{j}(S(R)_{j+1}-S(R)_j)\geq 1.\]
Moreover, since $\alpha_1=1$, each integer is in $S(R)$, so in fact $\limsup_{j}(S(R)_{j+1}-S(R)_j)=1$.
Therefore, by the previous claim, $\limsup_{j}(A_{j+1}-A_j)=1$, which completes the proof. \qed \\

\bigskip

\emph{Proof of Lemma \ref{bijection}:}  Let $N$ be a
large enough number such that for $x\ge N$ in $S(R)$, we have $|F(x)-x|~<~1/10$, and for
$y\ge N$ in $A$, $F^{-1}(y)$ consists of one point.  %It is clear that $N \ge N_0$.
First of all, note that $N \ge N_0$, and for $j>N$ the construction implies
$|G_i(j)-j|<1/10$ for $i=1,2$.  Moreover, 
\begin{equation}
\lim_{j\to\infty}|G_i(j)-j|=0\qquad (i=1,2).
\end{equation}

We construct $\hat F$ from $F$, since $S(\hat R)$ is a subset of $S(R)$. Given any $x>N+1$ in $S(\hat R)$, there are two possibilities. One is that $F(x)\in \hat A$, in which case we let $\hat F(x)=F(x)$. The other is that $F(x)\notin \hat A$, in which case $F(x)$ must have been removed from $A$ ``by mistake;" i.e. there is a $j_1$ for which say $F(x)=G_1(j_1)$. Then, obviously, $j_1>N$. In this case $F(j_1)$ cannot have been removed, and we put $\hat F(x):=F(j_1)$. More precisely, there are (at least) two representatives of $j_1$ in the multiset $S(R)$; denote their images under $F$ by $F_1(j_1)$ and $F_2(j_1)$. Suppose that one of these points coincides with $G_1(j_1)$, say $F_2(j_1)=G_2(j_1)$; then we let 
$\hat F(x):=F_1(j_1)$. On the other hand, if neither of these points 
coincide with $G_2(j_1)$, then there exists another point $y\in S(\hat R)$ with $F(y)=G_2(j_1)$. In this case, we let $\hat F(x):=F_1(j_1)$ and $\hat F(y):=F_2(j_1)$. After doing this for each $x>N+1$, we extend this mapping to the finite remainder of the multiset 
$S(\hat R)$ in whichever way we like.

We claim that $\hat F$ is a close almost-bijection. To see this, note that it is well-defined and in fact invertible for large $y$. Therefore it is an almost-bijection. It remains to prove that it is close. Assume that $x>N+1$. Then we either have $\hat F(x)=F(x)$ (and so $|F(x)-x|\to\infty$), or 
$\hat F(x)=F(j_1)$. In the latter case for $i=1,2$ we have 
\[|\hat F(x)-x|=|F(j_1)-G_i(j_1)|\leq |F(j_1)-j_1|+|G_i(j_1)-j_1|,\]
which goes to zero as $j_1$ goes to infinity. This completes the proof. \qed

\begin{remark}
  The results of this section could be used to study multisets that
  are unions of arithmetic progressions. Such multisets were studied,
  for instance, in \cite{Bir}.  In particular, it follows from Lemmas
  \ref{limsup} and \ref{bijection} that there is a close
  almost-bijection between two unions of arithmetic progressions if
  and only if they coincide as multisets.
\end{remark}


\begin{thebibliography}{ABCDE}
\bibitem[Ag]{Agr} M. S. Agranovich,
{\it On a mixed Poincar\'e-Steklov type spectral problem in a
Lipschitz domain}, Russ. J. Math. Phys. 13(3), (2006), 281-290.
\bibitem[ABBG]{Bir} S. Alvarez, D. Berend, L. Birbrair, D. Gir\~ao, {\it Resonance sequences and focal decomposition}, Israel J. Math.
170, no. 1 (2009), 269--284.
\bibitem[CEG]{CEG} B. Colbois, A. El Soufi, A. Girouard, {\it Isoperimetric control of the Steklov spectrum},
J.  Func. Anal. 261, no. 5 (2011),  1384-1399.
\bibitem[DR]{DR} P.G. Doyle and J.P. Rossetti, {\it  Laplace-isospectral hyperbolic $2$-orbifolds are representation-equivalent}, arXiv:1103.4372.
\bibitem[DG]{DG}  H. Duistermaat and V. Guillemin, \textit{Spectrum of positive elliptic operators and periodic bicharacteristics}, Invent. Math. 29, 
no.1 (1975), 39--79.
\bibitem[Ed]{e} J. Edward, \textit{An inverse spectral result for the Neumann operator on planar domains,} J. Func. Anal. 111 (1993), 312-322.
\bibitem[FS1]{FS} A. Fraser and R. Schoen, {\em Sharp eigenvalue
    bounds and minimal surfaces in the ball}, arXiv:1209.3789.
\bibitem[FS2]{FS2} A. Fraser and R. Schoen, {\em  Minimal surfaces and eigenvalue problems}, to appear in Contemp. Math.,  arXiv:1304.0851.
\bibitem[GP]{GP} A. Girouard and I. Polterovich,  {\it Shape optimization for low Neumann and Steklov eigenvalues},  Math. Meth. Appl. Sci. 33, no. 4 (2010),  501-516.
\bibitem[GPS]{Gor} C. Gordon, P. Perry, D. Schueth, {\em Isospectral and isoscattering manifolds: A survey of techniques and examples}, in: Geometry, Spectral Theory, Groups, and Dynamics (M. Entov, Y. Pinchover and M. Sageev, Editors), Contemp. Math. 387 (2005), 157--179.
\bibitem[HL]{HL} P.D. Hislop and C.V. Lutzer, {\it Spectral asymptotics of the Dirichlet-to-Neumann
map on multiply connected domains in $\mathbb{R}^d$}, Inverse Problems 17 (2001), 1717--1741.
\bibitem[H\"o]{ho} L. H\"ormander, {\it The analysis of partial differential operators, vol. IV}, Grundlehren 275, Springer-Verlag, New York, 1984.
\bibitem[JS]{JS} A. Jollivet and V. Sharafutdinov, {\it On an inverse problem for the Steklov spectrum of a Riemannian surface}, to appear in Contemp. Math.
\bibitem[LU]{lu} J. Lee and G. Uhlmann,  \textit{Determining isotropic real-analytic conductivities by boundary measurements}. Comm. Pure. Appl. Math. 42 (1989), 1097-1112.
\bibitem[KKP]{KKP} M. Karpukhin, G. Kokarev, I. Polterovich, {\it
    Multiplicity bounds for Steklov eigenvalues on Riemannian
    surfaces}, to appear in Ann. Inst. Fourier, arXiv:1209:4869.
\bibitem[Pa]{P} O. Parzanchevski, {\it On $G$--sets and isospectrality}, arXiv: 1104.0315.
\bibitem[PS]{PS} I. Polterovich and D.A. Sher, {\it Heat invariants of
    the Steklov problem}, arXiv:1304.7233, to appear in J. Geom. Anal. (published online in September 2013).
\bibitem[Ro]{Ro} G.V. Rozenblyum, {\it On the asymptotics of the
    eigenvalues of certain two-dimensional spectral
    problems}, Sel. Math. Sov. 5, (1986), 233-244.
 \bibitem[Sh]{sh} M.A. Shubin, \textit{Pseudodifferential operators
     and spectral theory}, Springer-Verlag, Berlin (1987).%, 278 pp
\bibitem[Tay]{Ta} M. Taylor, {\em Partial differential equations II. Qualitative studies of linear equations,}
Applied Mathematical Sciences 116, Springer-Verlag, New York, 1996.
\bibitem[Tr]{Treves} F. Treves, {\em Introduction to pseudodifferential and Fourier integral operators}, Vol. 1, Plenum Press, (1982).
\bibitem[We]{We} R. Weinstock, {\it Inequalities for a classical eigenvalue problem}, J. Rat. Mech. Anal. 3 (1954), 343--356.
\end{thebibliography}
\end{document}